\numberwithin{equation}{section}
\theoremstyle{plain}
\newtheorem{proposition}[equation]{Proposition}
\theoremstyle{definition}
\newtheorem{construction}[equation]{Construction}
\newtheorem{question}[equation]{Question}
\newtheorem{remark}[equation]{Remark}
\title{An example of liftings with different Hodge numbers}
\author{Shizhang Li}
\address{Department of Mathematics \\ Columbia University \\ MC 4406, 2990 Broadway,
  New York, NY 10027, U.S.}
\email{shanbei@math.columbia.edu}
\date{\today}
\keywords{}
\subjclass[2010]{}
\begin{document}
\maketitle

\begin{abstract}
  In this paper, we exhibit an example of a smooth proper variety in positive
  characteristic possessing two liftings with different Hodge numbers.
\end{abstract}

\section{Introduction}

Does a smooth proper variety in positive characteristic know the Hodge number of
its liftings? In this paper, we construct an example showing that the answer is
no in general. There are some constraints to make such an example. Such an
example must be of dimension at least \(3\) (see Proposition~\ref{surfaces}).
The examples we constructed here are \(3\)-folds in all characteristics
(including characteristic \(2\)), see Section~\ref{L'exemple},
Subsection~\ref{three} and Subsection~\ref{two}.

\section{Examples for \(p \geq 5\)}
\label{L'exemple}

In this section, let \(p \geq 5\) be a prime, let \(\mathrm{R} =
\mathbb{Z}_p[\zeta_p]\) where \(\zeta_p\) is a primitive \(p\)-th root of unity.
Let \(\mathcal{E}/\mathrm{Spec}(\mathrm{R})\) be an ordinary elliptic curve
possessing a \(p\)-torsion \(P \in \mathcal{E}(\mathrm{R})[p]\) which does not
specialize to the identity element. There are such pairs over \(\mathbb{Z}_p\).
Indeed, the Honda--Tate theory tells us the polynomial \(x^2 - x + p\)
corresponds to an ordinary elliptic curve \(\mathcal{E}_0\) over
\(\mathbb{F}_p\) with \(p\) rational points (c.f.~\cite[TH\'{E}OR\'{E}ME
1.(i)]{Honda--Tate}). In particular, we see that \(\mathcal{E}_0(\mathbb{F}_p)
\cong \mathbb{Z}/p\). Now the Serre--Tate theory (c.f.~\cite[Chapter
2]{Serre--Tate}) tells us \(\mathcal{E}\), the canonical lift of
\(\mathcal{E}_0\) over \(\mathbb{Z}_p\), satisfies
\({\mathcal{E}[p]}^{\acute{e}t}(\mathbb{Z}_p) \cong \mathbb{Z}/p \). Hence we
see that all the rational points of \(\mathcal{E}_0\) are liftable over
\(\mathbb{Z}_p\). Fix such an auxiliary elliptic curve along with this
\(p\)-torsion point. Denote the uniformizer \(\zeta_p - 1 \in \mathrm{R}\) by
\(\pi\). Denote the fraction field of \(\mathrm{R}\) by \(\mathrm{K}\), the
residue field by \(\kappa\).

We use curly letters to denote integral objects over
\(\mathrm{Spec}(\mathrm{R})\), use the corresponding straight letter to denote
its generic fibre and use subscript \({(\cdot)}_0\) to denote its special fibre,
i.e., reduction mod \(\pi\). For example, we will denote the generic fibre of
\(\mathcal{E}\) by \(E\) and the special fibre by \(\mathcal{E}_0\). To simplify
the notations, whenever no confusion seems to arise, we will not denote the base
over which we make the fibre product.

Let \(\mathcal{C}\) be the proper smooth hyper-elliptic curve over
\(\mathrm{Spec}(\mathrm{R})\) defined by
\[
  v^2 = \sum_{i=0}^{p-1} \frac{\binom{p}{i}}{{(\zeta_p - 1)}^{i}}
  u^{p-i}.
\]
We leave it to the readers to verify that this indeed defines a smooth proper
curve with the other affine piece given by \(v^2 = \sum_{i=0}^{p-1}
\frac{\binom{p}{i}}{{(\zeta_p - 1)}^{i}}u^{i+1}\).

One checks easily that this curve has genus \(\frac{p-1}{2}\) and
\(\mathcal{C}_0\), its reduction mod \(\pi\), is the hyper-elliptic curve
defined by
\[
  v^2 = u^p - u.
\]
After inverting \(\pi\) and making the substitution
\[
  x = (\zeta_p - 1) u + 1 \text{; and } y = v,
\]
we see that \(C\), the generic fibre of \(\mathcal{C}\), is the hyper-elliptic
curve defined by
\[
   {(\zeta_p - 1)}^p y^2 = x^p - 1.
\]
There is an \(\mathrm{R}\)-linear \(\mathbb{Z}/p = \langle \sigma
\rangle\)-action on \(\mathcal{C}\) given by
\[
  \sigma(u) = \zeta_p \cdot u + 1 \text{; and } \sigma(v) = v.
\]

One checks that in the generic fibre, using \(xy\)-coordinate, this action
becomes \(\sigma(x) = \zeta_p \cdot x\) and \(\sigma(y) = y\). In the special
fibre, this action becomes \(\sigma(u) = u + 1\) and \(\sigma(v) = v\).

We have a canonical character \(\chi \colon  \langle \sigma \rangle \rightarrow
K^\times\) by sending \(\sigma\) to \(\zeta_p\).

\begin{proposition}
\label{conjugation and decomposition}
  Using notations as above, we have
  \begin{enumerate}
  \item in the special fibre, the action of \(\sigma\) and \(\sigma^4\) are
    conjugate by an automorphism of \(\mathcal{C}_0\);
  \item in the generic fibre, we have a decomposition
    \[
      H^0(C , \Omega^1) = \bigoplus_{1 \leq i \leq \frac{p-1}{2}} \chi^i
    \]
    as representations of \(\langle \sigma \rangle\).
  \end{enumerate}
\end{proposition}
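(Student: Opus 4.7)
\medskip
\noindent\textbf{Plan for part (1).} I would look for an explicit automorphism of the special fibre $\mathcal{C}_0 : v^2 = u^p - u$ implementing the conjugation. The simplest ansatz is a scaling $\phi : (u,v) \mapsto (au, bv)$ with $a, b \in \kappa^\times$. Using that $(au)^p = a^p u^p$ in characteristic $p$, the equation becomes $b^2 v^2 = a^p u^p - au$, so $\phi$ preserves the curve precisely when $a^p = a = b^2$, i.e.\ when $a \in \mathbb{F}_p$ is a square in $\kappa$. A direct computation shows that conjugation by such a $\phi$ sends $\sigma : u \mapsto u+1$ to the automorphism $u \mapsto u + a$, which is $\sigma^a$. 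Taking $a = 4 = 2^2$ (legal since $p \geq 5$) gives the required conjugation. There is no real obstacle; the point is just to notice that this scaling respects the curve only in characteristic $p$, where $(au)^p$ collapses to $a^p u^p$.

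\medskip
\noindent\textbf{Plan for part (2).} I would pass to the $xy$-model ${(\zeta_p - 1)}^p y^2 = x^p - 1$ of $C$ and use the standard basis of holomorphic differentials on a hyperelliptic curve. For the curve $cy^2 = f(x)$ with $\deg f = 2g+1$, the forms
\[
  \omega_i \;=\; \frac{x^i\,dx}{y}, \qquad i = 0, 1, \dots, g-1,
\]
form a basis of $H^0(C, \Omega^1)$; I would invoke this standard fact rather than reprove it. Here $g = \tfrac{p-1}{2}$, so $i$ ranges over $0, \dots, \tfrac{p-3}{2}$. Since $\sigma(x) = \zeta_p x$ and $\sigma(y) = y$, one computes
\[
  \sigma^\ast \omega_i \;=\; \frac{(\zeta_p x)^i\, d(\zeta_p x)}{y} \;=\; \zeta_p^{i+1}\, \omega_i,
\]
so $\omega_i$ spans the $\chi^{i+1}$-eigenspace of $\sigma$. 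As $i+1$ runs through $1, 2, \dots, \tfrac{p-1}{2}$, the $\omega_i$ are eigenvectors for distinct characters and are therefore linearly independent; by dimension count they form a basis, giving the asserted decomposition
\[
  H^0(C, \Omega^1) \;=\; \bigoplus_{1 \leq j \leq \frac{p-1}{2}} \chi^j.
\]
There is no serious obstacle here either, aside from citing (or verifying) the standard basis of hyperelliptic differentials.
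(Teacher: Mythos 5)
Your proposal is correct and follows essentially the same route as the paper: for (1) you use the very same scaling automorphism $(u,v)\mapsto(4u,2v)$ (the paper's $\tau$), checking $4^p=4=2^2$ in characteristic $p$, and for (2) you use the standard hyperelliptic basis $x^i\,dx/y$, $0\le i\le g-1$, on which $\sigma$ acts by $\zeta_p^{i+1}$. No gaps; the concluding independence/dimension-count remark in (2) is redundant once the standard basis fact is invoked, but harmless.
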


\begin{proof}
  (1) Consider the automorphism \(\tau \colon \mathcal{C}_0 \to \mathcal{C}_0\)
  given by
  \[
    \tau(u)  = 4u \text{; and } \tau(v) = 2v.
  \]

  One easily verifies that this preserves the equation \(v^2 = u^p - u\) hence
  an automorphism of \(\mathcal{C}_0\), and that \(\tau \circ \sigma \circ
  \tau^{-1} = \sigma^4\). This completes the proof of (1).

  (2) Recall that \(\left\{ \frac{\mathop{dx}}{y}, \frac{x \mathop{dx}}{y},
    \ldots, \frac{x^{g-1} \mathop{dx}}{y} \right\}\) form a basis of \(H^0(C ,
  \Omega^1)\) whenever \(C\) is a genus \(g\) hyper-elliptic curve given by
  \(y^2 = f(x)\)~\cite[page 255]{GH}. One checks immediately that under this
  basis, \(\sigma\) acts by the characters as in the Proposition.
\end{proof}

Recall that we have fixed an auxiliary elliptic curve \(\mathcal{E}\) over
\(\mathrm{R}\) and a \(p\)-torsion point \(P\) on it which does not specialize
to identity element. Hence translating by \(P\) defines an order \(p\)
automorphism of \(\mathcal{E}\) over \(\mathrm{R}\) which acts trivially on the
global \(1\)-forms, let us denote this action by \(\tau_P\).

\begin{construction}
  Let \(\mathcal{X} \coloneqq (\mathcal{C} \times \mathcal{C} \times
  \mathcal{E}) / \langle (\sigma,\sigma,\tau_P) \rangle\) and let \(\mathcal{Y}
  \coloneqq (\mathcal{C} \times \mathcal{C} \times \mathcal{E}) / \langle
  (\sigma,\sigma^4,\tau_P) \rangle\). Here we mean the schematic quotient by the
  indicated \textit{diagonal} action.
\end{construction}

Then we have the following:

\begin{proposition}
  Both \(\mathcal{X}\) and \(\mathcal{Y}\) are smooth projective over
  \(\mathrm{Spec}(\mathrm{R})\), and their special fibers are isomorphic as
  smooth proper \(k\)-varieties. Moreover we have \(H^0(X, \Omega^3_X) = 0\) and
  \(H^0(Y, \Omega^3_Y) \not= 0\).
\end{proposition}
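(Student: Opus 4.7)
The plan is to address the four assertions — smoothness, properness, isomorphism of special fibres, and the comparison of $H^0(-,\Omega^3)$ — in turn, with Proposition~\ref{conjugation and decomposition} supplying the principal input. For smoothness and properness, I would verify that both diagonal actions on $\mathcal{C} \times \mathcal{C} \times \mathcal{E}$ are free over $\mathrm{Spec}(\mathrm{R})$. The $\sigma$-action on $\mathcal{C}$ itself may well have fixed points, but this is immaterial: translation by $P$ is fibrewise fixed-point-free on $\mathcal{E}$, because $P$ has order $p$ generically and, by our standing hypothesis, specialises to a non-identity $p$-torsion point of $\mathcal{E}_0$. Hence both diagonal actions are free on the triple product, and the quotients are étale locally isomorphic to the cover, giving smoothness; properness follows from the finiteness of the group.

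For the isomorphism of special fibres, I would invoke Proposition~\ref{conjugation and decomposition}(1) to obtain $\tau \in \mathrm{Aut}(\mathcal{C}_0)$ with $\tau\sigma\tau^{-1} = \sigma^4$. Then $(\mathrm{id}_{\mathcal{C}_0}, \tau, \mathrm{id}_{\mathcal{E}_0})$ is an automorphism of $\mathcal{C}_0 \times \mathcal{C}_0 \times \mathcal{E}_0$ that intertwines the $(\sigma,\sigma,\tau_P)$-action with the $(\sigma,\sigma^4,\tau_P)$-action, and therefore descends to an isomorphism $\mathcal{X}_0 \xrightarrow{\sim} \mathcal{Y}_0$.

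For the Hodge comparison on the generic fibre, freeness of the action together with characteristic zero yields $H^0(X, \Omega^3_X) = H^0(C \times C \times E, \Omega^3)^{G}$, and similarly for $Y$. Since $\tau_P$ is translation, it acts trivially on $H^0(E,\Omega^1)$, so by Künneth and Proposition~\ref{conjugation and decomposition}(2) the computation reduces to locating invariants in
\[
  \bigoplus_{1 \leq i,j \leq (p-1)/2} \chi^i \otimes \chi^j \otimes \mathbf{1}.
\]
For $\mathcal{X}$ the generator acts on the $(i,j)$-summand by $\zeta_p^{i+j}$; since $2 \leq i+j \leq p-1$, no invariant survives. For $\mathcal{Y}$ it acts by $\zeta_p^{i+4j}$, and the congruence $i + 4j \equiv 0 \pmod p$ admits a solution with $1 \leq i, j \leq (p-1)/2$ for every prime $p \geq 5$ — e.g.\ $(i,j) = (1,1)$ for $p=5$, $(3,1)$ for $p=7$, and for $p \geq 11$ one may take $j = \lceil (p+1)/8 \rceil$ and $i \equiv -4j \pmod p$ — producing a nonzero invariant.

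The only non-formal point I anticipate is this last arithmetic step, which must hold uniformly in $p \geq 5$; everything else is either structural (that a free action of a finite group on a smooth proper scheme yields a smooth proper quotient, with cohomology of forms computed by invariants in characteristic zero) or a direct application of Proposition~\ref{conjugation and decomposition}.
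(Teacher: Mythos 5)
Your proposal is correct and follows essentially the same route as the paper: freeness via the translation factor gives smooth proper quotients compatible with reduction, the conjugating automorphism $\tau$ from Proposition~\ref{conjugation and decomposition}(1) identifies the special fibres, and the generic-fibre computation is Künneth plus the character decomposition of Proposition~\ref{conjugation and decomposition}(2). The only difference is cosmetic: where you solve $i+4j \equiv 0 \pmod p$ by cases, the paper simply exhibits the uniform invariant $(i,j) = \left(2, \tfrac{p-1}{2}\right)$, i.e.\ the form $\frac{x_1\,dx_1}{y_1}\wedge\frac{x_2^{(p-3)/2}\,dx_2}{y_2}\wedge\omega$, which works for all $p \geq 5$.
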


\begin{proof}
  The third component ensures that the action is fixed point free. Therefore the
  quotient is smooth and proper, and it satisfies the following base change of
  taking quotient:
  \[
    \mathcal{X}_0 \cong (\mathcal{C}_0 \times \mathcal{C}_0
                    \times \mathcal{E}_0) / \langle (\sigma,\sigma,\tau_P)
                    \rangle \text{, }
    \mathcal{Y}_0 \cong (\mathcal{C}_0 \times \mathcal{C}_0
                    \times \mathcal{E}_0) / \langle (\sigma,\sigma^4,\tau_P) \rangle.
  \]

  By~\ref{conjugation and decomposition}~(1), \(\sigma\) and \(\sigma^4\) are
  conjugate to each other by \(\tau\) (with notations loc.~cit.). We see that
  \((\mathop{id}, \tau, \mathop{id})\) induces an isomorphism between
  \(\mathcal{X}_0\) and \(\mathcal{Y}_0\).

  In the generic fibre, we have that the global \(3\)-forms of the quotient are
  identified as the invariant (regarding respective actions) global \(3\)-forms
  of \(C \times C \times E\). By K\"{u}nneth formula and~\ref{conjugation and
    decomposition}~(2), we have the following decomposition
  \[
    H^{3,0} (C \times C \times E) = \bigoplus_{1 \leq i \leq \frac{p-1}{2}}
    \chi^i \otimes \bigoplus_{1 \leq i \leq \frac{p-1}{2}} \chi^i \otimes
    \mathbbm{1}
  \]
  as \( (\sigma,\sigma,\tau_P) \)-representations. Therefore we see that
  \(H^{3,0}(X) = 0\). To see that \(H^0(Y, \Omega^3_Y) \not= 0\), we note that
  in the above decomposition \(\frac{x_1 \mathop{d x_1}}{y_1} \wedge
  \frac{x_2^{\frac{p-3}{2}} \mathop{d x_2}}{y_2} \wedge \omega\) is invariant
  under \((\sigma, \sigma^4, \tau_P)\), where \(\omega\) is some translation
  invariant nonzero \(1\)-form on \(E\). Here we have used \( p \geq 5\), so
  that \(\frac{x_1 \mathop{d x_1}}{y_1}\) is a \textit{holomorphic} global
  \(1\)-form on \(C\). Hence we get that \(H^0(Y, \Omega^3_Y) \not= 0\).
\end{proof}

\begin{remark}
One may compute the hodge diamonds of \(X\) and \(Y\), let us record the result
here. The Hodge diamond of \(X\) is
{\small \[
\begin{array}{ccccccccc}
&&&& 1 &&&& \\[3mm]
&&& 1 && 1 &&& \\[3mm]
\ \ \ \ && 0 && p+2 && 0 && \ \ \ \ \\[3mm]
& 0 && p+1 && p+1 && 0 \\[3mm]
&& 0 && p+2 && 0 && \\[3mm]
&&& 1 && 1 && &\\[3mm]
&&&& 1 &&&& \\[3mm]
\end{array}
\]}
and the Hodge diamond of \(Y\) is
{\small \[
\begin{array}{ccccccccc}
  &&&& 1 &&&& \\[3mm]
  &&& 1 && 1 &&& \\[3mm]
  \ \ \ \ && \dfrac{p-1}{4} + \epsilon_p && \dfrac{p+5}{2} - 2\epsilon_p && \dfrac{p-1}{4} + \epsilon_p && \ \ \ \ \\[3mm]
  & \dfrac{p-1}{4} + \epsilon_p && \dfrac{3p+5}{4} - \epsilon_p && \dfrac{3p+5}{4} - \epsilon_p && \dfrac{p-1}{4} + \epsilon_p \\[3mm]
  && \dfrac{p-1}{4} + \epsilon_p && \dfrac{p+5}{2} - 2\epsilon_p && \dfrac{p-1}{4} + \epsilon_p && \\[3mm]
  &&& 1 && 1 && &\\[3mm]
  &&&& 1 &&&& \\[3mm]
\end{array}
\]}
where \(\epsilon_p\) depends on the congruent class of \(p\) mod \(8\), and is given by
\[
  \epsilon_p = \begin{cases}
    0 &\quad p \equiv 1 \mod 8 \\
    -\frac{1}{2} &\quad p \equiv 3 \mod 8 \\
    1 &\quad p \equiv 5 \mod 8 \\
    \frac{1}{2} &\quad p \equiv 7 \mod 8 \\
     \end{cases}.
\]
\end{remark}

\begin{remark}
  Those readers who are familiar with Deligne--Lusztig varieties perhaps have
  realized the curve \(\mathcal{C}_0 = \{y^2 = x^p - x\}\) is nothing but the
  quotient of the Drinfel'd curve \(\{y^{p+1} = x^p z - x z^p\}\)
  (c.f.~\cite[Ch. 2]{Deligne--Lusztig}), where the quotient is with respect to
  the subgroup \(\mu_{\frac{p+1}{2}} \subset \mu_{p+1}\) acts on \(y\) by
  multiplication and fixes \(x\) and \(z\). Hence the curve \(\mathcal{C}_0\)
  bears the action of \(\mathrm{SL}_2(\mathbb{F}_p) \times \mathbb{Z}/2\) where
  the second factor is the hyper-elliptic structure of \(\mathcal{C}_0\). Under
  this identification, the \(\sigma\) (resp.~\(\tau\)) we find above correspond
  to \(
  \begin{pmatrix}
    1 & 1 \\
    0 & 1
  \end{pmatrix}
  \) (resp.~\(
  \begin{pmatrix}
    2 & 0 \\
    0 & \frac{1}{2}
  \end{pmatrix}
  \) (possibly times the nontrivial involution depending on whether
  \(2^{\frac{p+1}{2}} = 2 \text{ or } -2\) in \(\mathbb{F}_p\))).
\end{remark}


Following the same spirit, we construct similar example in the case \( p = 3\)
(see Subsection~\ref{three}) and \( p = 2\) (see Subsection~\ref{two}).



\section{Complements and Remarks}

\subsection{Case \(p = 3\)}
\label{three}

Let us consider the case \(p=3\) in this subsection. Let \(\mathrm{R} =
\mathbb{Z}_3[\omega,i]\) where \(\omega\) is a \(3\)-rd root of unity and \(i^2
=-1\). Denote the uniformizer \(\omega - 1 \in \mathrm{R}\) by \(\pi\). Let
\(\mathcal{C}\) be the proper smooth hyper-elliptic curve over
\(\mathrm{Spec}(\mathrm{R})\) defined by
\[
  v^2 = {(u^3 + (\omega^2 - 1) u^2 - \omega^2 u)}^3 + (u^3 + (\omega^2 - 1) u^2
  - \omega^2 u).
\]
One checks easily that this curve has genus \(4\) and \(\mathcal{C}_0\), its
reduction mod \(\pi\), is the hyper-elliptic curve defined by
\[
  v^2 = u^9 - u.
\]
After inverting \(\pi\) and making the substitution
\[
  x = (\omega - 1) u + 1 \text{; and } y = v,
\]

we see that \(C\), the generic fibre of \(\mathcal{C}\), is the
hyper-elliptic curve defined by
\[
  y^2 = \frac{1}{{(\omega - 1)}^9} \cdot {(x^3 - 1)}^3 + \frac{1}{{(\omega -
      1)}^3} \cdot {(x^3 - 1)}.
\]
There is an \(\mathrm{R}\)-linear \(\mathbb{Z}/3\)-action on \(\mathcal{C}\)
given by
\[
  \sigma(u) = \omega \cdot u + 1 \text{; and } \sigma(v) = v.
\]

Similar to the Section~\ref{L'exemple} and use analogous notation as there, we
state the following:

\begin{proposition}

\label{auxiliary proposition three}
Using notations as above, we have
\begin{enumerate}
\item in the special fibre, the action of \(\sigma\) and \(\sigma^2\) are
  conjugate by an automorphism of \(\mathcal{C}_0\);
\item in the generic fibre, we have a decomposition
  \[
    H^0(C , \Omega^1_C) = \chi^{\oplus 2} \oplus \chi^2 \oplus \mathbbm{1}
  \]
  as representations of \(\langle \sigma \rangle\).
\end{enumerate}
\end{proposition}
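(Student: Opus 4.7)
The plan is to mimic the proof of Proposition~\ref{conjugation and decomposition} almost verbatim, the only substantive difference being that the conjugating automorphism in part (1) requires a square root of $-1$, which is why the ring $\mathrm{R}$ was enlarged to include $i$.

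For part (1), observe first that on the special fibre $\mathcal{C}_0 = \{v^2 = u^9 - u\}$, the reduction of $\sigma$ is simply $u \mapsto u+1$, $v \mapsto v$, since $\omega \equiv 1 \pmod \pi$. I would therefore look for $\tau$ of the form $\tau(u) = au$, $\tau(v) = bv$ conjugating $\sigma$ to $\sigma^2$; the identity $\tau\sigma = \sigma^2\tau$ forces $a = 2$. To ensure $\tau$ preserves the equation $v^2 = u^9 - u$, one needs $b^2 = 2^9 = 2$ in $\mathbb{F}_3$; since $2 = -1$ is not a square in $\mathbb{F}_3$, take $b = i$, which lives in $\kappa = \mathbb{F}_9$ by our choice of $\mathrm{R}$. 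The verification that this $\tau$ is an automorphism and that $\tau \circ \sigma \circ \tau^{-1} = \sigma^2$ is then immediate.

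For part (2), I would use the change of coordinates $x = (\omega - 1) u + 1$, $y = v$ to pass to the model of $C$ given in the excerpt. A direct computation shows $\sigma(x) = \omega \cdot x$ and $\sigma(y) = y$, so $\sigma$ acts on $x$ through $\chi$. The curve has genus $4$ (the right-hand side is of degree $9$ in $x$), and the standard basis
\[
\frac{dx}{y},\ \frac{x\, dx}{y},\ \frac{x^2\, dx}{y},\ \frac{x^3\, dx}{y}
\]
of $H^0(C,\Omega^1_C)$ then transforms under $\sigma$ by the characters $\chi,\chi^2,\mathbbm{1},\chi$, respectively (since $\omega^3 = 1$), yielding the claimed decomposition.

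The only thing I expect to need care is part (1): one must notice that $2$ is a non-square in $\mathbb{F}_3$ and that the author has anticipated this by adjoining $i$, so that $\tau$ is defined over $\kappa$. Once that point is recognized, both statements reduce to short direct verifications, entirely parallel to those in Proposition~\ref{conjugation and decomposition}.
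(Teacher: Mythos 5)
Your proposal is correct and matches the paper's intended argument: the paper only says the proof is ``similar'' to Proposition~\ref{conjugation and decomposition}, with the hint that $2=-1=i^2$ is a square in $\mathbb{F}_9$, and your explicit $\tau(u)=2u$, $\tau(v)=iv$ together with the standard basis $\frac{x^k\,dx}{y}$, $0\le k\le 3$, giving characters $\chi,\chi^2,\mathbbm{1},\chi$, is exactly that argument spelled out. No gaps.
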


The proof is similar, notice that now the automorphism group of
\(\mathcal{C}_0\) is \(\mathrm{SL}_2(\mathbb{F}_9) \times \mathbb{Z}/2\) and \(2
= -1 = i^2\) is a square in \(\mathbb{F}_9\).

Possibly passing to an unramified extension of \(\mathrm{R}\), we may assume as
before that there is an elliptic curve \(\mathcal{E}\) over \(\mathrm{R}\)
together with a nonzero \(3\)-torsion point \(P\). Then we make the following:

\begin{construction}
  Let \(\mathcal{X} \coloneqq (\mathcal{C} \times \mathcal{C} \times
  \mathcal{E}) / \langle (\sigma,\sigma,\tau_P) \rangle\) and let \(\mathcal{Y}
  \coloneqq (\mathcal{C} \times \mathcal{C} \times \mathcal{E}) / \langle
  (\sigma,\sigma^2,\tau_P) \rangle\).
\end{construction}

\begin{proposition}
  Both of \(\mathcal{X}\) and \(\mathcal{Y}\) are smooth projective over
  \(\mathrm{Spec}(\mathrm{R})\) and we have \(h^{3,0}(X) = 5\) and \(h^{3,0}(Y)
  = 6\).
\end{proposition}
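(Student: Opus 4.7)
The plan is to follow the proof of the analogous statement in Section~\ref{L'exemple} essentially verbatim, substituting Proposition~\ref{auxiliary proposition three} for Proposition~\ref{conjugation and decomposition}. Smoothness and properness of $\mathcal{X}$ and $\mathcal{Y}$ are immediate from the fact that $\tau_P$ acts freely on $\mathcal{E}$ (because $P$ is a nonzero $3$-torsion point not specializing to the identity, obtained by passing to an unramified extension if necessary), so the diagonal actions on $\mathcal{C} \times \mathcal{C} \times \mathcal{E}$ are fixed-point free. Quotients of smooth proper schemes by free actions of finite groups are smooth and proper, and such quotients commute with base change; combined with Proposition~\ref{auxiliary proposition three}(1), this also yields $\mathcal{X}_0 \cong \mathcal{Y}_0$ via $(\mathrm{id}, \tau, \mathrm{id})$, which makes the pair a genuine example of the phenomenon of the paper.

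For the Hodge numbers, I identify $H^0(X, \Omega^3_X)$ (resp.\ $H^0(Y, \Omega^3_Y)$) with the invariants of the action of $(\sigma,\sigma,\tau_P)$ (resp.\ $(\sigma,\sigma^2,\tau_P)$) on $H^{3,0}(C \times C \times E)$. Since $\tau_P$ acts trivially on $H^0(E, \Omega^1_E) \cong \mathbbm{1}$, the K\"unneth formula together with Proposition~\ref{auxiliary proposition three}(2) gives
\[
  H^{3,0}(C \times C \times E) \cong \bigl( \chi^{\oplus 2} \oplus \chi^2 \oplus \mathbbm{1} \bigr) \otimes \bigl( \chi^{\oplus 2} \oplus \chi^2 \oplus \mathbbm{1} \bigr) \otimes \mathbbm{1}
\]
as a representation of $\langle \sigma \rangle \times \langle \sigma \rangle \times \langle \tau_P \rangle$, and it remains only to count invariants under the two different diagonal embeddings $\mathbb{Z}/3 \hookrightarrow \mathbb{Z}/3 \times \mathbb{Z}/3$.

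This last step is routine character arithmetic modulo $3$. For $X$, I need pairs $(i, j)$ with $i + j \equiv 0 \pmod 3$ weighted by the multiplicities $(m_{\mathbbm{1}}, m_\chi, m_{\chi^2}) = (1, 2, 1)$ on each factor: the contributing pairs are $(0, 0), (1, 2), (2, 1)$, giving $1 \cdot 1 + 2 \cdot 1 + 1 \cdot 2 = 5$. For $Y$, I need $i + 2j \equiv 0 \pmod 3$, equivalently $i \equiv j \pmod 3$: the contributing pairs are $(0, 0), (1, 1), (2, 2)$, giving $1 + 4 + 1 = 6$. There is no real obstacle; the whole proof is a transcription of the $p \geq 5$ argument, with the only genuine content already absorbed into Proposition~\ref{auxiliary proposition three}.
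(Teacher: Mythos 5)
Your proof is correct and is exactly the argument the paper intends: the paper states this proposition without proof, as a transcription of the \(p \geq 5\) case, and your invariant count (\(1+2+2=5\) for the \((\sigma,\sigma,\tau_P)\)-invariants and \(1+4+1=6\) for the \((\sigma,\sigma^2,\tau_P)\)-invariants, using the multiplicities \((1,2,1)\) from Proposition~\ref{auxiliary proposition three}(2)) correctly supplies the only new arithmetic. Your observation that \(P\) must not specialize to the identity, so that the diagonal action is free on the special fibre as well, is the right reading of the paper's ``as before.''
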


\subsection{Case \(p = 2\)}
\label{two}

Let us consider the case \(p=2\) in this subsection. Let us just construct such
an example over some \(2\)-adic base (without caring how ramified this base is).
Let \(\mathcal{O}\) be the ring of integers inside a large enough local
\(2\)-adic field \(K\) so that there are
\begin{enumerate}
\item an elliptic curve with ordinary reduction \(\mathcal{E}\) over
  \(\mathrm{Spec}(\mathcal{O})\) and a \(4\)-torsion point \(P \in
  \mathcal{E}(\mathcal{O})[4]\) such that \(2 \cdot P_0 \not= 0\), and;
\item an elliptic curve \(\mathcal{C}\) over \(\mathrm{Spec}(\mathcal{O})\) with
  \(j\)-invariant \(1728\) such that there is an automorphism of \(\mathcal{C}\)
  of order \(4\) which will be denoted by \(i\) and so that
  \(\mathrm{Aut}(\mathcal{C}_0) = \mathcal{O}_D^*\). Here \(D\) denotes the
  quaternion algebra over \(\mathbb{Q}\) ramified over \(2\) and \(\infty\), and
  \(\mathcal{O}_D^*\) means the group of units inside the maximal order of this
  quaternion algebra.
\end{enumerate}

One can always enlarge the \(2\)-adic field \(K\) so that these are achieved.
Note that by the last condition, the primitive fourth root of unity must lie in
\(K\) and let us still denote it by \(i\). Finally there is a tautological
character \(\chi: \mathbb{Z}/4 \to K^*\) sending \(1\) to \(i\). The following
Proposition is what we need.

\begin{proposition}
\label{auxiliary proposition two}
Using notations as above, we have
\begin{enumerate}
\item in the special fibre, the action of \(i\) and \(-i\) are
  conjugate by an automorphism of \(\mathcal{C}_0\);
\item in the generic fibre, we have
  \[
    H^0(C , \Omega^1_C) = \chi
  \]
  as representations of \(\mathbb{Z}/4 \cong \langle i \rangle\).
\end{enumerate}
\end{proposition}

This is almost trivial: for (1) we have the identity \(-j \cdot i \cdot j =
-i\), and (2) is a standard fact about elliptic curve with complex
multiplication by \(i\).

Lastly we make the following:

\begin{construction}
  Let \(\mathcal{X} \coloneqq (\mathcal{C} \times \mathcal{C} \times
  \mathcal{E}) / \langle (i, i,\tau_P) \rangle\) and let \(\mathcal{Y}
  \coloneqq (\mathcal{C} \times \mathcal{C} \times \mathcal{E}) / \langle
  (i,-i,\tau_P) \rangle\).
\end{construction}

\begin{proposition}
  Both of \(\mathcal{X}\) and \(\mathcal{Y}\) are smooth projective over
  \(\mathrm{Spec}(\mathcal{O})\) and we have \(h^{3,0}(X) = 0\) and \(h^{3,0}(Y)
  = 1\).
\end{proposition}

\begin{remark}
  Note that in characteristic \(3\), the automorphism group of the elliptic
  curve with \(j\)-invariant \(0 = 1728\) is the dicyclic group
  \(\mathrm{Dic}_3\) of order \(12\). In particular, the automorphism \(\omega\)
  is conjugate to \(\omega^2\). Using this, we may make similar examples in
  characteristic \(3\).
\end{remark}

\subsection{Final Remarks}

The following Proposition shows that our example is sharp in terms of its
dimension (the case of curve is trivial).

\begin{proposition}
\label{surfaces}
Let \(\mathcal{X}\) and \(\mathcal{Y}\) be smooth proper schemes over
\(\mathrm{Spec}(\mathcal{O})\) of relative dimension \(2\). Suppose
\(\mathcal{X}_0 \cong \mathcal{Y}_0\), then \(h^{i,j}(X) = h^{i,j}(Y)\) for all
\(i\),\(j\).
\end{proposition}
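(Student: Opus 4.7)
The plan is to reduce comparison of all Hodge numbers to three deformation-invariant numerical invariants: the $\ell$-adic Betti numbers $b_1$, $b_2$, and the coherent Euler characteristic $\chi(\mathcal{O})$. Since $X$ and $Y$ are smooth proper over the characteristic $0$ field $\mathrm{Frac}(\mathcal{O})$, Serre duality gives $h^{i,j} = h^{2-i,2-j}$, and Hodge symmetry (reducing to $\mathbb{C}$ via the Lefschetz principle) gives $h^{i,j} = h^{j,i}$; together these reduce the independent Hodge numbers on each side to $h^{1,0}$, $h^{2,0}$, and $h^{1,1}$. Hodge-to-de Rham degeneration in characteristic $0$ then yields
\[
  b_1(X) = 2\,h^{1,0}(X), \qquad b_2(X) = 2\,h^{2,0}(X) + h^{1,1}(X),
\]
while the Hodge decomposition of $H^\ast(X,\mathcal{O}_X)$ gives $\chi(\mathcal{O}_X) = 1 - h^{1,0}(X) + h^{2,0}(X)$, and similarly for $Y$. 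Hence equality of the three invariants on $X$ and $Y$ will force $h^{i,j}(X) = h^{i,j}(Y)$ for all $i,j$.

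Next I would verify these three equalities using the hypothesis $\mathcal{X}_0 \cong \mathcal{Y}_0$. Fix a prime $\ell$ different from the residue characteristic of $\mathcal{O}$; smooth proper base change provides canonical isomorphisms
\[
  H^i_{\text{\'et}}(X_{\overline{K}}, \mathbb{Q}_\ell) \;\cong\; H^i_{\text{\'et}}(\mathcal{X}_0 \times_\kappa \overline{\kappa}, \mathbb{Q}_\ell),
\]
and analogously for $\mathcal{Y}$, so the given isomorphism of special fibres, together with the comparison between $\ell$-adic and classical Betti numbers in characteristic zero, yields $b_i(X) = b_i(Y)$. For the coherent Euler characteristic, flatness and properness of $\mathcal{X},\mathcal{Y} \to \mathrm{Spec}(\mathcal{O})$ make $s \mapsto \chi(\mathcal{O}_{\mathcal{X}_s})$ locally constant, so $\chi(\mathcal{O}_X) = \chi(\mathcal{O}_{\mathcal{X}_0}) = \chi(\mathcal{O}_{\mathcal{Y}_0}) = \chi(\mathcal{O}_Y)$.

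No serious obstacle arises; the proof is an assembly of standard results. The conceptual point worth emphasizing is that the Hodge numbers of the common special fibre never enter the argument, so the pathologies the main construction exhibits in relative dimension $3$ are invisible to the invariants available here. One relies only on étale Betti numbers and coherent Euler characteristics, both of which are known a priori to be constant in flat proper families, together with the fortunate fact that in dimension $2$ these few numbers suffice to pin down the entire Hodge diamond.
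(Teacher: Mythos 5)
Your proof is correct and takes essentially the same route as the paper: deformation invariance of $\ell$-adic Betti numbers via smooth proper base change, constancy of coherent Euler characteristics in flat proper families, and characteristic-zero Hodge theory (symmetry, Serre duality, Hodge-to-de Rham degeneration) together pin down the full Hodge diamond of a surface. The only cosmetic difference is that for the degree-two Hodge numbers you combine $b_2$ with $\chi(\mathcal{O})$, whereas the paper handles that degree purely with Euler characteristics of flat coherent sheaves (in effect $\chi(\mathcal{O})$ and $\chi(\Omega^1)$) after settling the odd-degree numbers with $b_1$.
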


\begin{proof}
  Since for surfaces we have \(\frac{1}{2} b_1 = h^{0,1} = h^{1,0} = h^{0,3} =
  h^{3,0}\), by smooth proper base change we know that these numbers only depend
  on the special fibre. Therefore the Hodge numbers of \(X\) and \(Y\) agree
  except for the degree \(2\) part. Now the fact that the Euler characteristic
  of a flat coherent sheaf stays constant in a family shows that the degree
  \(2\) Hodge numbers of \(X\) and \(Y\) also agree.
\end{proof}

In order to make such an example, dimension is certainly not the only
constraint.

\begin{proposition}
\label{torsions}
Let \(\mathcal{X}\) and \(\mathcal{Y}\) be smooth proper schemes over
\(\mathrm{Spec}(\mathcal{O})\) with \(\mathcal{X}_0 \cong \mathcal{Y}_0\).
Suppose the Hodge-to-de Rham spectral sequence for \(\mathcal{X}_0\) degenerates
at \(E_1\)-page and \(H^r_{\text{crys}}(\mathcal{X}_0/W(k))\) is torsion-free
for all \(r\). Then \(h^{i,j}(X) = h^{i,j}(Y)\) for all \(i\),\(j\).
\end{proposition}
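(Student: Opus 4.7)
The hypotheses (Hodge-to-de Rham degeneration and torsion-free crystalline cohomology) concern only the common special fibre $\mathcal{X}_0 \cong \mathcal{Y}_0$, so they apply symmetrically to both families. It therefore suffices to prove $h^{i,j}(X) = h^{i,j}(\mathcal{X}_0)$ for all $i,j$; the analogous equality for $\mathcal{Y}$ then yields the conclusion via $\mathcal{X}_0 \cong \mathcal{Y}_0$.

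For each fixed $r$ the plan is to establish the chain of equalities
\[
\sum_{i+j=r} h^{i,j}(X) \;=\; \dim_K H^r_{\mathrm{dR}}(X/K) \;=\; \mathrm{rank}_{W(k)} H^r_{\mathrm{crys}}(\mathcal{X}_0/W(k)) \;=\; \dim_k H^r_{\mathrm{dR}}(\mathcal{X}_0/k) \;=\; \sum_{i+j=r} h^{i,j}(\mathcal{X}_0).
\]
The two outer equalities are Hodge-to-de Rham degeneration: Deligne's theorem in characteristic zero on the left, and the hypothesis on the right. The third equality comes from the universal coefficient sequence
\[
0 \to H^r_{\mathrm{crys}}(\mathcal{X}_0/W(k)) \otimes_{W(k)} k \to H^r_{\mathrm{dR}}(\mathcal{X}_0/k) \to \mathrm{Tor}^{W(k)}_1\bigl(H^{r+1}_{\mathrm{crys}}(\mathcal{X}_0/W(k)),\, k\bigr) \to 0,
\]
which collapses to an isomorphism since both $H^r$ and $H^{r+1}$ of crystalline cohomology are free $W(k)$-modules by the torsion-freeness hypothesis. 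The second equality bridges the two characteristics; I would obtain it from smooth proper base change for $\ell$-adic cohomology ($\ell \neq p$) combined with Artin's Betti--étale comparison, the de Rham--Betti comparison in characteristic zero, and Katz--Messing's identification of the $\ell$-adic and crystalline Betti numbers (alternatively, a Berthelot--Ogus type comparison applied after a harmless base change to $W(\overline{k})$).

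With the resulting equality $\sum_{i+j=r} h^{i,j}(X) = \sum_{i+j=r} h^{i,j}(\mathcal{X}_0)$ in hand for every $r$, I would conclude by upper semicontinuity. Since $\mathcal{X}/\mathcal{O}$ is smooth the sheaves $\Omega^i_{\mathcal{X}/\mathcal{O}}$ are locally free and their formation commutes with base change to either fibre, so Grauert's theorem gives $h^{i,j}(X) \leq h^{i,j}(\mathcal{X}_0)$ for every $(i,j)$; combined with equal diagonal sums, every inequality is forced to be an equality. The main obstacle is the middle crystalline bridge --- once the torsion-freeness hypothesis has been converted into the rank equality $\dim_K H^r_{\mathrm{dR}}(X/K) = \dim_k H^r_{\mathrm{dR}}(\mathcal{X}_0/k)$, the rest is bookkeeping between the degeneration hypotheses and a standard semicontinuity argument.
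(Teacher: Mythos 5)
Your proposal is correct and takes essentially the same route as the paper: char-zero Hodge theory on the generic fibre, the degeneration hypothesis on the special fibre, torsion-freeness of crystalline cohomology to equate the de Rham Betti numbers of the two fibres, and upper semicontinuity of \(h^{i,j}\) to split the equal diagonal sums term by term. The only difference is that you spell out the crystalline bridge (via Berthelot--Ogus, or the \(\ell\)-adic detour with Katz--Messing) and the universal-coefficient sequence, which the paper compresses into the one-line assertion that torsion-freeness gives \(h^r_{\mathrm{dR}}(X) = h^r_{\mathrm{dR}}(\mathcal{X}_0)\).
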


\begin{proof}
  The crystalline cohomology being torsion-free implies that
  \(h^r_{\mathrm{dR}}(X) = h^r_{\mathrm{dR}}(\mathcal{X}_0)\). In the generic
  fibre, by Hodge theory, we have \(\sum_{i+j = r} h^{i,j}(X) =
  h^r_{\mathrm{dR}}(X)\). In the special fibre, by the degeneration of
  Hodge-to-de Rham spectral sequence, we have \(\sum_{i+j = r}
  h^{i,j}(\mathcal{X}_0) = h^r_{\mathrm{dR}}(\mathcal{X}_0)\). These three
  equalities along with upper semi-continuity of \(h^{i,j}\) imply
  \(h^{i,j}(\mathcal{X}_0) = h^{i,j}(X)\). Then same argument implies
  \(h^{i,j}(\mathcal{X}_0) = h^{i,j}(Y)\). Hence we see that the Hodge numbers
  of \(X\) and \(Y\) are the same.
\end{proof}

\begin{remark}
  Using the fact that \(H_1(C;\mathbb{Z})\) as a \(\mathbb{Z}/p\)-module is the
  augmentation ideal in \(\mathbb{Z}[\mathbb{Z}/p]\), one can show that
  \(h^1_{\mathrm{dR}}(\mathcal{X}_0) = 4\) and \(h^1_{\mathrm{dR}}(X) = 2\),
  which implies that \(\dim_{\mathbb{F}_p}
  H^2_{\text{crys}}(\mathcal{X}_0/W(k))[p] = 2\).

  A more detailed study shows that the length of torsions in the crystalline
  cohomology groups of our examples stay bounded for all primes \(p\), however
  the discrepancy between \(h^{3,0}(X)\) and \(h^{3,0}(Y)\) grows linearly in
  \(p\).
\end{remark}

\begin{remark}
Although our examples here are not simply connected, one can bootstrap them to
simply connected ones by embedding them into a projective space, blow up, and
take complete intersections of dimension at least \(3\). The author would like
to thank Jason Starr for pointing this out to him.
\end{remark}

We conclude this paper by observing that the examples we found are over ramified
base with absolute ramification index \(p-1\) and asking:

\begin{question}
\label{question}
  Is there a pair of smooth proper schemes \(\mathcal{X}\) and \(\mathcal{Y}\)
  over \(\mathrm{Spec}(W(k))\), such that
  \begin{enumerate}
  \item \(\mathcal{X}_0 \cong \mathcal{Y}_0\) and;
  \item \(h^{i,j}(X) \not= h^{i,j}(Y)\) for some \(i,j\)?
  \end{enumerate}
\end{question}

Note that by~\cite[Corollaire 2.4]{Deligne--Illusie} the Hodge-to-de Rham
spectral sequence for any smooth proper \(\mathcal{X}_0\) degenerates at
\(E_1\)-page, provided that \(\dim(\mathcal{X}_0) < p\) and \(\mathcal{X}_0\)
lifts to \(W_2(k)\). In particular, the example asked for in
Question~\ref{question}, if it exists and is of small dimension (say,
\(3\)-fold), must have torsion in \(H^*_{\text{crys}}(\mathcal{X}_0/W(k))\) by
Proposition~\ref{torsions}.



\section*{Acknowledgement}
The author would like to thank Brian Lawrence for asking him the question this
paper is concerned with. He thanks Johan de Jong heartily for warm encouragement
and stimulating discussions. He would also like to thank Daniel Litt, Qixiao Ma,
Jason Starr, Shuai Wang, Yihang Zhu and Ziquan Zhuang for helpful discussions.

\end{document}